\numberwithin{equation}{section}
\newtheorem{theorem}{Theorem}[section]
\newtheorem{lemma}[theorem]{Lemma}
\theoremstyle{definition}
\newtheorem{definition}[theorem]{Definition}
\theoremstyle{remark}
\newcommand{\cA}{{\mathcal A}}
\newcommand{\cD}{{\mathcal D}}
\newcommand{\cE}{{\mathcal E}}
\newcommand{\cF}{{\mathcal F}}
\newcommand{\cG}{{\mathcal G}}
\newcommand{\cL}{{\mathcal L}}
\newcommand{\cP}{{\mathcal P}}
\newcommand{\R}{{\mathbb R}}
\newcommand{\Z}{{\mathbb Z}}
\def\al{\alpha}
\def\kp{\kappa}
\def\lm{\lambda}
\def\sg{\sigma}
\def\Gm{\Gamma}
\def\6{\partial}
\def\l{\left}
\def\r{\right}
\def\ds{\displaystyle}
\begin{document}
\title
[A characterization for the boundedness of positive operators]
{A characterization for the boundedness of positive operators in a filtered measure space}
\author[H.~Tanaka]{Hitoshi Tanaka}
\address{Graduate School of Mathematical Sciences, The University of Tokyo, Tokyo, 153-8914, Japan}
\email{htanaka@ms.u-tokyo.ac.jp}
\thanks{
The first author is supported by 
the Global COE program at Graduate School of Mathematical Sciences, the University of Tokyo, 
Grant-in-Aid for Scientific Research (C) (No.~23540187), 
the Japan Society for the Promotion of Science, 
and was supported by F\=ujyukai foundation.
}
\author[Y.~Terasawa]{Yutaka Terasawa}
\address{Graduate School of Mathematical Sciences, The University of Tokyo, Tokyo, 153-8914, Japan}
\email{yutaka@ms.u-tokyo.ac.jp}
\thanks{
The second author is a Research Fellow of the Japan Society for the Promotion of Science.
}
\subjclass[2010]{42B25, 60G46 (primary), 60G40, 60G42 (secondary).}
\keywords{
conditional expectation;
positive operator;
filtered measure space;
martingale;
Sawyer type checking condition;
the Carleson embedding theorem.
}
\date{}

\begin{abstract}
In terms of Sawyer type checking condition, 
a complete characterization is established 
for which the positive operator in a filtered measure space is bounded from $L^p(d\mu)$ to $L^q(d\mu)$ 
with $1<p\le q<\infty$. 
\end{abstract}

\maketitle

\section{Introduction}\label{sec1}
The purpose of this paper is to establish a complete characterization 
for which the positive operator in a filtered measure space as we introduced in \cite{TaTe}
becomes bounded from $L^p(d\mu)$ to $L^q(d\mu)$ 
with $1<p\le q<\infty.$
In his elegant paper \cite{Tr} 
Sergei Treil gives a simple proof of 
the following Sawyer type characterization 
of the two weight estimate for positive dyadic operators: 

\begin{quote}{\it 
Let 
$\cD$ be a dyadic lattice in $\R^n$, 
$\al_{I}:\,\cD\to[0,\infty)$ be a map 
and $\mu$ and $\nu$ be Radon measures in $\R^n$. 
Let $1<p<\infty$ and $1/p+1/p'=1$.
Then 
$$
\l\|
\sum_{I\in\cD}
\al_{I}\int_{I}f\,d\mu\,1_{I}
\r\|_{L^p(d\nu)}
\le
C_1\|f\|_{L^p(d\mu)}
$$
holds if and only if, 
for all $I_0\in\cD$, 
$$
\l\{\begin{array}{l}
\int_{I_0}\l(\sum_{I\in\cD:\,I\subset I_0}\al_{I}\mu(I)1_{I}\r)^p\,d\nu
\le
C_2^p\mu(I_0),
\\
\int_{I_0}\l(\sum_{I\in\cD:\,I\subset I_0}\al_{I}\nu(I)1_{I}\r)^{p'}\,d\mu
\le
C_2^{p'}\nu(I_0).
\end{array}\r.
$$
Moreover,
the least possible $C_1$ and $C_2$ are equivalent.
}\end{quote}

This theorem was first proved for $p=2$ in \cite{NaTrVo} by the Bellman function method. 
Later in \cite{LaSaUr} 
this theorem was proved in full generality; 
the case $L^p(d\mu)\to L^q(d\nu)$, $1<p\le q<\infty$, 
was treated there. However, 
the construction was complicated and depends very much on the dyadic structure.
The proof due to Treil is very simple and, 
as is mentioned in the end of his paper, 
all the proofs work well in a more general martingale situation {\it with} a lattice structure. 
In this paper, 
we shall extend this theorem to a more general martingale situation {\it without} a lattice structure
for the case $\mu=\nu$ (Theorem \ref{thm1.1}). 
Our main idea in the proof is substitution 
of \lq\lq the stopping moments" in \cite{Tr}, which is also called "principal cubes" in other literatures,
for \lq\lq the principal sets", 
which was first used in \cite{TaTe}. 
\lq\lq The stopping moments" 
and 
\lq\lq the principal sets" 
are constructed from a data function $f$. 
If we assume a lattice structure, 
we can assure 
the commonality of constructions made from different data functions $f$ and $g$. 
Not assuming a lattice structure, 
we can expect no commonality, and, 
this is a reason why 
we can not extend the above theorem to a more general martingale situation for the two weight case. 
The checking condition in the above theorem is called 
\lq\lq Sawyer type checking condition", 
since this was first introduced by Eric Sawyer 
in \cite{Sa1,Sa2}. 

Let a triplet $(\Omega,\cF,\mu)$ be a measure space. 
Denote by $\cF^0$ the collection of sets in $\cF$ with finite measure. 
The measure space $(\Omega,\cF,\mu)$ is called $\sg$-finite 
if there exist sets $E_i\in\cF^0$ such that
$\bigcup_{i=0}^{\infty}E_i=\Omega$.
In this paper all measure spaces are assumed to be $\sg$-finite. 
Let $\cA\subset\cF^0$ be an arbitrary subset of $\cF^0$. 
An $\cF$-measurable function 
$f:\,\Omega\to\R$ 
is called $\cA$-integrable if 
it is integrable on all sets of $\cA$, 
i.e., 
$$
1_{E}f\in L^1(\cF,\mu)
\text{ for all }
E\in\cA.
$$
Denote the collection of all such functions by
$L_{\cA}^1(\cF,\mu)$.

If $\cG\subset\cF$ is another $\sg$-algebra, 
it is called a sub-$\sg$-algebra of $\cF$. 
A function 
$g\in L_{\cG^0}^1(\cG,\mu)$ 
is called the conditional expectation of 
$f\in L_{\cG^0}^1(\cF,\mu)$ 
with respect to $\cG$ if there holds
$$
\int_{G}f\,d\mu=\int_{G}g\,d\mu
\text{ for all }
G\in\cG^0.
$$
The conditional expectation of $f$ with respect to $\cG$ 
will be denoted by $E[f|\cG]$, 
which exists uniquely in 
$L_{\cG^0}^1(\cG,\mu)$ 
due to $\sg$-finiteness of $(\Omega,\cG,\mu)$.

A family of sub-$\sg$-algebras 
$(\cF_i)_{i\in\Z}$ 
is called a filtration of $\cF$ if 
$\cF_i\subset\cF_j\subset\cF$ 
whenever $i,j\in\Z$ and $i<j$. 
We call a quadruplet 
$(\Omega,\cF,\mu;(\cF_i)_{i\in\Z})$ 
a $\sg$-finite filtered measure space.
We write 
$$
\cL
:=
\bigcap_{i\in\Z}L_{\cF_i^0}^1(\cF,\mu).
$$
Notice that 
$L_{\cF_i^0}^1(\cF,\mu)
\supset
L_{\cF_j^0}^1(\cF,\mu)$ 
whenever $i<j$. 
For a function $f\in\cL$ 
we will denote $E[f|\cF_i]$ by $\cE_if$. 
By the tower rule of conditional expectations, 
a family of functions
$\cE_if\in L_{\cF_i^0}^1(\cF_i,\mu)$ 
becomes a martingale. 
By a weight we mean a nonnegative function 
which belongs to $\cL$ and, 
by a convention, 
we will denote the set of all weights by $\cL^{+}$. 

Let $\al_i$, $i\in\Z$, be a 
nonnegative bounded $\cF_i$-measurable function 
and set $\al=(\al_i)$. 
For a function $f\in\cL$ 
we define a positive operator $T_{\al}f$ by 
$$
T_{\al}f:=\sum_{i\in\Z}\al_i\cE_if.
$$
The following is our main theorem. 

\begin{theorem}\label{thm1.1}
Let $1<p\le q<\infty$. Then 
\begin{equation}\label{1.1}
\|T_{\al}f\|_{L^q(d\mu)}
\le
C_1\|f\|_{L^p(d\mu)}
\end{equation}
holds if and only if 
\begin{equation}\label{1.2}
\sup_{i\in\Z}\sup_{E\in\cF_i^0}
\mu(E)^{\frac1q-\frac1p-\frac1r}
\l(\int_{E}\l(\sum_{j\ge i}\al_j\r)^r\,d\mu\r)^{\frac1r}
\le C_2<\infty,
\end{equation}
where $r:=\max(q,p')$. 
Moreover, 
the least possible $C_1$ and $C_2$ are equivalent.
\end{theorem}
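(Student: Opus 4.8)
The plan is to treat the two implications separately: necessity by testing \eqref{1.1} and its dual on indicators of sets in $\cF_i^0$, and sufficiency by dualizing and running a stopping-time decomposition built from the principal sets of \cite{TaTe}, precisely in order to compensate for the missing lattice structure.

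For necessity ($\eqref{1.1}\Rightarrow\eqref{1.2}$) I would first record the formal adjoint of $T_\al$ for the $L^2(d\mu)$ pairing: since each $\cE_i$ is self-adjoint and $\al_i$ is $\cF_i$-measurable,
\[
T_\al^* g=\sum_{i\in\Z}\cE_i(\al_i g),
\]
and \eqref{1.1} is equivalent to $\|T_\al^* g\|_{L^{p'}(d\mu)}\le C_1\|g\|_{L^{q'}(d\mu)}$ with the same constant. Fix $i\in\Z$ and $E\in\cF_i^0$. For every $j\ge i$ the function $1_E$ is $\cF_j$-measurable, so $\cE_j 1_E=1_E$ and hence $T_\al 1_E\ge 1_E\sum_{j\ge i}\al_j$; likewise $\al_j1_E$ is $\cF_j$-measurable, whence $\cE_j(\al_j1_E)=\al_j1_E$ and $T_\al^* 1_E\ge 1_E\sum_{j\ge i}\al_j$. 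In the case $r=q\,(\ge p')$, testing \eqref{1.1} on $f=1_E$ gives $(\int_E(\sum_{j\ge i}\al_j)^q\,d\mu)^{1/q}\le C_1\mu(E)^{1/p}$, which is exactly \eqref{1.2}; in the case $r=p'\,(>q)$, testing the dual inequality on $g=1_E$ gives $(\int_E(\sum_{j\ge i}\al_j)^{p'}\,d\mu)^{1/p'}\le C_1\mu(E)^{1/q'}$, again \eqref{1.2} once one simplifies the exponent $\tfrac1q-\tfrac1p-\tfrac1{p'}=-\tfrac1{q'}$. This yields $C_2\lesssim C_1$.

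For sufficiency ($\eqref{1.2}\Rightarrow\eqref{1.1}$) I would dualize and bound the symmetric bilinear form
\[
B(f,g)=\int(T_\al f)g\,d\mu=\sum_{i\in\Z}\int\al_i(\cE_i f)(\cE_i g)\,d\mu,\qquad f,g\ge0,
\]
by $C\,C_2\|f\|_{L^p(d\mu)}\|g\|_{L^{q'}(d\mu)}$, where I have inserted $\cE_i$ on $g$ using that $\al_i\cE_i f$ is $\cF_i$-measurable. Since $B$ is symmetric in $f$ and $g$, I would construct the principal sets of \cite{TaTe} for whichever argument makes the Hölder exponent below equal to $r=\max(q,p')$: for $f$ when $r=q$ and for $g$ when $r=p'$. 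This produces a countable tree $\cP$ of sets $P\in\cF_{i_P}^0$ along which the stopping averages $a_P:=\mu(P)^{-1}\int_P f\,d\mu$ grow geometrically, with pairwise disjoint major subsets (sparseness) and the corona control $\cE_i f\lesssim a_P$ on the corona of $P$. Splitting the sum over $i$ according to these coronas and then applying, on each $P$, Hölder's inequality with exponents $r,r'$ together with \eqref{1.2} in the form $(\int_P(\sum_{j\ge i_P}\al_j)^r\,d\mu)^{1/r}\le C_2\mu(P)^{1/p+1/r-1/q}$, I would reduce $B(f,g)$ to a single positive sum over $\cP$ in which the $f$-data survives only through the averages $a_P$ and the $g$-data only through integrals over the coronas; this sum is then controlled by the sparseness estimate $\sum_P a_P^p\mu(P)\lesssim\|f\|_{L^p(d\mu)}^p$ for the $f$-factor and by the Carleson embedding theorem for the $g$-factor.

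The delicate point, and the reason the argument must retain the full corona structure rather than crudely enlarging each local integral to all of $P$, is that along a deep chain of nested principal sets the mass of $g$ would otherwise be counted once for every ancestor, destroying the estimate; keeping the coronas assigns that mass to a single generation. This is exactly where the lack of a lattice bites: one cannot run the symmetric stopping construction for $f$ and $g$ at once and guarantee the \lq\lq commonality" that Treil exploits, so I would build the tree for only one function and absorb the other through the single condition \eqref{1.2}. This is legitimate because, in the present one-weight setting, \eqref{1.2} with $r=\max(q,p')$ self-improves by Jensen's inequality to the companion inequality with exponent $\min(q,p')$, so the one checking condition plays the role of both Sawyer conditions of the two-weight theorem. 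I expect the verification of the corona control and of the Carleson summation in the non-atomic filtered setting to be the main technical obstacle; tracking constants throughout the sufficiency proof and combining with $C_2\lesssim C_1$ then gives the asserted equivalence of the least $C_1$ and $C_2$.
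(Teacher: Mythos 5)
Your necessity argument is correct and essentially the paper's (test $f=1_E$, test $g=1_E$ in the dual inequality, and use that normalized $L^s$-averages increase in $s$ to pass between the single $r$-condition and the pair of $(q,p')$-conditions). The genuine gap is in the sufficiency direction, at exactly the point you flag as ``delicate.'' If you build principal sets only for $f$ and apply H\"older's inequality on each corona, the $g$-factor you produce is
\[
\Bigl(\int_{P}\bigl(\sup_{\kappa_1(P)\le i<\tau_{P}}1_{\{i<\tau_P\}}\cE_ig\bigr)^{q'}\,d\mu\Bigr)^{1/q'},
\]
an $L^{q'}$-integral over \emph{all} of $P$: the time-corona $\{(x,i):x\in P,\ \kappa_1(P)\le i<\tau_P(x)\}$ projects spatially onto the whole of $P$ (at $i=\kappa_1(P)$ it covers $P$), not onto the pairwise disjoint sets $P\setminus\cP(P)$. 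So ``keeping the coronas'' does not assign the $g$-mass to a single generation. You can split the spatial integral into $P\setminus\cP(P)$ (disjoint across $P$, hence harmless) and the part over the children, but the latter is genuinely present --- each child $Q$ contributes at all times $\kappa_1(P)\le i<\kappa_1(Q)$, i.e.\ before $Q$ is born, and these times belong to $P$'s corona, not to $Q$'s --- and its $g$-content is an $L^{q'}$-integral over $\cP(P)$. Summing such integrals over the nested family $\cP$ overcounts along chains (concentrate $g$ on a deep descendant and the sum grows with the depth), and no Carleson embedding rescues it: the packing property (v) yields Lemma \ref{lem2.2}, i.e.\ summability of $\mu(P)$ against stopping values of the \emph{same} function $f$ that built the sets, but $\sum_{P\in\cP}\int_{\cP(P)}(g^{*})^{q'}d\mu$ is simply not bounded by $\|g\|_{L^{q'}(d\mu)}^{q'}$ for an independent $g$. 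Your Jensen self-improvement of \eqref{1.2} relates exponents only; it does not touch this nesting problem.

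The idea your proposal is missing is the paper's pointwise splitting performed \emph{before} any corona argument: set $F_i=\{(\cE_ig)^{q'}\le(\cE_if)^p\}$, $G_i=\Omega\setminus F_i$, and prove the two halves \eqref{2.2} and \eqref{2.3} separately, with the additive bound \eqref{2.1} (exponent $\theta=\tfrac1p+\tfrac1{q'}$) recovering the product bound by homogeneity. In the half carrying $1_{F_i}$ one has $1_{F_i}\cE_ig\le(1_{F_i}\cE_if)^{p/q'}$, so after H\"older the troublesome $g$-factor becomes pure $f$-data: over the children it is at most $\{\mu(P)2^{p(\kappa_2(P)+1)}\}^{1/q'}$, and the sum over $P$ closes by $\theta\ge1$ and Lemma \ref{lem2.2}, while over the disjoint sets $P\setminus\cP(P)$ it is dominated by $\int(f^{*})^p\,d\mu$ and Doob's theorem applies. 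Thus each half of the estimate involves only one function --- which is precisely how the paper circumvents the missing lattice structure --- whereas a one-sided corona that carries the other function along as local $L^{q'}$-norms cannot close. (Note also that the paper's scheme uses \emph{both} inequalities of \eqref{1.4}, the $q$-one for the $f$-half and the $p'$-one for the $g$-half, rather than only the $r$-exponent condition.)
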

We remark that the checking condition of Theorem \ref{thm1.1} 
looks like the Morrey norm (see, for example, \cite{SaSuTa}). 
For a Morrey spaces in a filtered measure spaces with lattice structure see \cite{NaSa}. 

For a function $f,g\in\cL$ 
we define a positive bilinear operator $T_{\al}(f,g)$ by 
$$
T_{\al}(f,g)
:=
\sum_{i\in\Z}\al_i(\cE_if)(\cE_ig).
$$
Theorem \ref{thm1.1} can be proved easily by the following\footnote{
Notice that 
$$
\max\l\{
\l(\frac{1}{\mu(E)}\int_{E}\l(\sum_{j\ge i}\al_j\r)^q\,d\mu\r)^{\frac1q}
\,,\,
\l(\frac{1}{\mu(E)}\int_{E}\l(\sum_{j\ge i}\al_j\r)^{p'}\,d\mu\r)^{\frac1{p'}}
\r\}
=
\l(\frac{1}{\mu(E)}\int_{E}\l(\sum_{j\ge i}\al_j\r)^r\,d\mu\r)^{\frac1r}.
$$
}.

\begin{theorem}\label{thm1.2}
Let $1<p\le q<\infty$. Then 
\begin{equation}\label{1.3}
\|T_{\al}(f,g)\|_{L^1(d\mu)}
\le C_1
\|f\|_{L^p(d\mu)}\|g\|_{L^{q'}(d\mu)}
\end{equation}
holds if and only if, 
for any $E\in\cF_i^0$, $i\in\Z$, 
\begin{equation}\label{1.4}
\l\{\begin{array}{l}
\l(\int_{E}\l(\sum_{j\ge i}\al_j\r)^q\,d\mu\r)^{\frac1q}
\le C_2\mu(E)^{\frac1p},
\\
\l(\int_{E}\l(\sum_{j\ge i}\al_j\r)^{p'}\,d\mu\r)^{\frac1{p'}}
\le C_2\mu(E)^{\frac1{q'}}.
\end{array}\r.
\end{equation}
Moreover, 
the least possible $C_1$ and $C_2$ are equivalent.
\end{theorem}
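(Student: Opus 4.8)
The plan is to prove the two implications separately: the checking conditions $(1.4)$ come from testing, while the norm inequality $(1.3)$ comes from a stopping-time (principal-set) decomposition of the data.

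\textbf{Necessity.} To obtain $(1.4)$ from $(1.3)$ I would test. Fix $i\in\Z$, $E\in\cF_i^0$, and a nonnegative bounded $h$ supported on $E$. Since $E\in\cF_i\subset\cF_j$ for every $j\ge i$, the indicator $1_{E}$ is $\cF_j$-measurable, so $\cE_j1_{E}=1_{E}$ and $\al_j1_{E}$ is $\cF_j$-measurable; hence $\int1_{E}\al_j\cE_jh\,d\mu=\int_{E}\al_jh\,d\mu$ by the defining property of conditional expectation. Summing over $j\ge i$ and discarding the nonnegative terms with $j<i$ gives
\[
\int_{E}\left(\sum_{j\ge i}\al_j\right)h\,d\mu
=\sum_{j\ge i}\int\al_j(\cE_j1_{E})(\cE_jh)\,d\mu
\le\int T_{\al}(1_{E},h)\,d\mu
\le C_1\mu(E)^{\frac1p}\|h\|_{L^{q'}(d\mu)} .
\]
Truncating $\sum_{j\ge i}\al_j$ at height $N$, taking the supremum over $h$ with $\|h\|_{L^{q'}(d\mu)}\le1$, and letting $N\to\infty$, the $L^q$--$L^{q'}$ duality yields the first line of $(1.4)$ with constant $\le C_1$. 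Running the same computation with the arguments interchanged (test $T_{\al}(h,1_{E})$ and dualize in $L^{p'}$) produces the second line, again with constant $\le C_1$.

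\textbf{Sufficiency.} Assume $(1.4)$. Positivity of $\cE_i$ and of $\al_i$ lets me replace $f,g$ by $|f|,|g|$, so I take $f,g\ge0$ and, by homogeneity, $\|f\|_{L^p(d\mu)}=\|g\|_{L^{q'}(d\mu)}=1$; write $I:=\int\sum_i\al_i(\cE_if)(\cE_ig)\,d\mu$. A useful preliminary is that, $\al_i\cE_if$ being $\cF_i$-measurable, $\int\al_i(\cE_if)g\,d\mu=\int\al_i(\cE_if)(\cE_ig)\,d\mu$, so $I=\langle T_{\al}f,g\rangle_{L^2(d\mu)}$; this is the identity behind the equivalence of Theorem \ref{thm1.2} with Theorem \ref{thm1.1}. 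The engine of the proof is the filtered substitute for the principal cubes of \cite{Tr}, namely the principal sets of \cite{TaTe}. Fixing $a>1$ and the stopping times $\tau_k:=\inf\{i:\cE_if>a^k\}$, set $A_{k,i}:=\{\tau_k\le i\}\in\cF_i$ and $O_k:=\{Mf>a^k\}=\bigcup_iA_{k,i}$ with $Mf:=\sup_i\cE_if$; Doob's inequality gives $\sum_ka^{kp}\mu(O_k)\lesssim\|Mf\|_{L^p(d\mu)}^p\lesssim\|f\|_{L^p(d\mu)}^p$. The definition of $\tau_k$ produces, after an Abel summation, the pointwise domination
\[
\cE_if\le a\left(1-\tfrac1a\right)\sum_ka^k1_{A_{k,i}},\qquad A_{k,i}\in\cF_i,
\]
whose point is that every level set $A_{k,i}$ is $\cF_i$-measurable, so the identity $\int\al_i(\cdots)\cE_ig=\int\al_i(\cdots)g$ applies once more. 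Carrying out the identical construction for $g$ (legitimate here because a single measure $\mu$ is involved), with levels $a^l$, sets $A'_{l,i}\in\cF_i$ and maximal function $Mg$, and inserting both dominations, I reach
\[
I\le(a-1)^2\sum_{k,l}a^{k+l}\sum_i\int_{A_{k,i}\cap A'_{l,i}}\al_i\,d\mu .
\]

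\textbf{Closing the estimate.} For fixed $(k,l)$ the sets $A_{k,i}\cap A'_{l,i}\in\cF_i$ increase with $i$; decomposing according to the first index $m$ at which both thresholds are crossed and writing $D_{k,l,m}\in\cF_m^0$ for the resulting stopping set, the inner sum telescopes to $\int_{D_{k,l,m}}\left(\sum_{j\ge m}\al_j\right)d\mu$, again by the $\cF_m$-measurability of $D_{k,l,m}$. Hölder together with $(1.4)$ bounds this by $C_2\,\mu(D_{k,l,m})^{\frac1p+\frac1{q'}}$. When $p=q$ the exponent is $1$; since for each $(k,l)$ the $D_{k,l,m}$ partition $O_k\cap O'_l$ and $a^k\le Mf$, $a^l\le Mg$ there, summing first in $m$ and then in $k,l$ collapses the whole expression into $\int Mf\cdot Mg\,d\mu\le\|Mf\|_{L^p(d\mu)}\|Mg\|_{L^{q'}(d\mu)}\lesssim1$, which is the desired bound. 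For $p<q$ the exponent exceeds $1$ and the remaining summation is carried out through the Carleson embedding theorem, which is precisely where the packing property of the principal sets is consumed.

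\textbf{Main obstacle.} The whole difficulty is the absence of a lattice structure: the $\cF_i$ need not be atomic, so $\cE_if$ is not constant on a stopping set, the principal \emph{values} must be taken to be the dyadic levels $a^k$ of $Mf$ rather than genuine averages, and there is no common refinement aligning the stopping family of $f$ with that of $g$. This is exactly the mismatch that obstructs the two-weight statement; what rescues the one-weight case is that both families are built against the same $\mu$, so after the two dominations the estimate is absorbed by the single product $\int Mf\cdot Mg\,d\mu$ together with the checking conditions. Making this rigorous without atoms—in particular the packing/Carleson embedding estimate for non-atomic $\cF_i$ and the summation in the range $p<q$—is the heart of the argument and is supplied by the principal sets of \cite{TaTe}.
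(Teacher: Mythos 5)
Your necessity argument matches the paper's, and your sufficiency scheme is genuinely different from the paper's: you build \emph{two} stopping families, one from $f$ (the sets $A_{k,i}$) and one from $g$ (the sets $A'_{l,i}$), and intersect them, whereas the paper never intersects two families. Your argument is in fact complete when $p=q$ (there $\theta=\tfrac1p+\tfrac1{q'}=1$, the $m$-sum collapses into $\mu(O_k\cap O'_l)$, and the $(k,l)$-sum is exactly $\int Mf\,Mg\,d\mu$). But for $p<q$ there is a genuine gap at precisely the step you dismiss with ``the remaining summation is carried out through the Carleson embedding theorem.'' After your H\"older step the quantity to control is
\begin{equation*}
\Sigma:=\sum_{k,l}a^{k+l}\sum_m\mu(D_{k,l,m})^{\theta},\qquad\theta=\tfrac1p+\tfrac1{q'}>1,
\end{equation*}
and no single-family packing/Carleson estimate applies to the doubly indexed family $\{D_{k,l,m}\}$: the $k$- and $l$-sums must be kept coupled. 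The obvious decoupling fails. Indeed, bounding $\sum_m\mu(D_{k,l,m})\le\mu(O_k\cap O'_l)$ and then $\mu(O_k\cap O'_l)^{\theta}\le\mu(O_k)^{1/p}\mu(O'_l)^{1/q'}$ splits $\Sigma$ into the product $\left(\sum_k a^k\mu(O_k)^{1/p}\right)\left(\sum_l a^l\mu(O'_l)^{1/q'}\right)$, and these factors are not controlled by your normalization: the constraint $\sum_k a^{kp}\mu(O_k)\lesssim1$ supplied by Doob's inequality is compatible with $\mu(O_k)=a^{-kp}k^{-2}$ for $k\ge1$, for which $\sum_k a^k\mu(O_k)^{1/p}=\sum_k k^{-2/p}$ diverges whenever $p\ge2$. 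So, as written, the case $p<q$ --- the main new content of the theorem --- has no proof.

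The gap is fillable, but it requires an actual argument, not a citation. Within your scheme one can use $\theta\ge1$ to write $\sum_m\mu(D_{k,l,m})^{\theta}\le\mu(O_k\cap O'_l)^{\theta}$ and then apply Minkowski's integral inequality ($\mu$-integration inside, counting measure over $(k,l)$ outside):
\begin{equation*}
\Sigma^{1/\theta}
\le\int_{\Omega}\left(\sum_{k,l}a^{k+l}1_{O_k\cap O'_l}\right)^{1/\theta}d\mu
\lesssim\int_{\Omega}\left(Mf\,Mg\right)^{1/\theta}d\mu
\le\|Mf\|_{L^p(d\mu)}^{1/\theta}\|Mg\|_{L^{q'}(d\mu)}^{1/\theta}
\lesssim1,
\end{equation*}
where the last H\"older step works precisely because $p\theta$ and $q'\theta$ are conjugate exponents, and Doob's maximal theorem finishes. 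The paper instead avoids the two-family problem altogether: it splits $\Omega$ at each time $i$ into $F_i=\{(\cE_ig)^{q'}\le(\cE_if)^p\}$ and its complement, so that on $F_i$ the factor $\cE_ig$ is dominated by $(\cE_if)^{p/q'}$ and only the stopping family built from $f$ is ever needed; the summation is then closed by a one-family Carleson embedding (Lemma \ref{lem2.2}), the disjointness of the sets $P\setminus\cP(P)$, and Doob's theorem. Either supplement would make your proof complete; without one of them it is not.
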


The letter $C$ will be used for constants
that may change from one occurrence to another.
Constants with subscripts, such as $C_1$, $C_2$, do not change
in different occurrences.

\section{Proof of Theorem \ref{thm1.2}}
In what follows 
we shall prove Theorem \ref{thm1.2}. 
We first list three basic properties of the conditional expectation 
and the definition of a martingale. 

Let $(\Omega,\cF,\mu)$ be a $\sg$-finite measure space 
and $\cG$ be a sub-$\sg$-algebra of $\cF$. 
Then the following holds.

\begin{description}
\item[{\rm(i)}] 
Let 
$f\in L_{\cG^0}^1(\cF,\mu)$ 
and $g$ be a $\cG$-measurable function. 
Then the two conditions 
$fg\in L_{\cG^0}^1(\cF,\mu)$ 
and 
$gE[f|\cG]\in L_{\cG^0}^1(\cG,\mu)$ 
are equivalent and we have 
$$
E[fg|\cG]=gE[f|\cG];
$$
\item[{\rm(ii)}] 
Let 
$f_1,f_2\in L_{\cG^0}^1(\cF,\mu)$. 
Then the three conditions 
$$
E[f_1|\cG]f_2\in L_{\cG^0}^1(\cG,\mu),
\quad
E[f_1|\cG]E[f_2|\cG]
\in L_{\cG^0}^1(\cG,\mu)
\text{ and }
f_1E[f_2|\cG]
\in L_{\cG^0}^1(\cG,\mu)
$$
are all equivalent and we have 
$$
E[E[f_1|\cG]f_2|\cG]
=
E[f_1|\cG]E[f_2|\cG]
=
E[f_1E[f_2|\cG]|\cG];
$$
\item[\rm{(iii)}]
Let 
$\cG_1\subset\cG_2\subset\cF$ 
be two sub-$\sigma$-algebras of $\cF$ and 
let $f\in L_{\cG_2^0}(\cF,\mu)$. 
Then 
$$
E[f|\cG_1]=E[E[f|\cG_2]|\cG_1].
$$
\end{description}

\noindent
(i) can be proved by an approximation by simple functions. 
The property (ii) means that 
conditional expectation operators are selfadjoint and
can be easily deduced from (i). (iii) can be proved easily
and called the tower rule of conditional expectations.

\begin{definition}\label{def2.1}
Let 
$(\Omega,\cF,\mu;(\cF_i)_{i\in\Z})$ 
be a $\sg$-finite filtered measure space.
Let $(f_i)_{i\in\Z}$ be a sequence of 
$\cF_i$-measurable functions. 
Then the sequence $(f_i)_{i\in\Z}$ is called a 
\lq\lq martingale" if 
$f_i\in L_{\cF_i^0}^1(\cF_i,\mu)$
and 
$f_i=\cE_if_j$ 
whenever $i<j$. 
\end{definition}

From now on we start the proof of Theorem \ref{thm1.2}. 
Without loss of generality we may assume that 
$f$ and $g$ belong class $\cL^{+}$ and 
are bounded and have the supports on the finite measure. 
The \lq\lq only if" part can be easily proved. 
Since we have 
$$
\int_{\Omega}\l(\sum_i\al_i(\cE_if)\r)g\,d\mu
=
\sum_i
\int_{\Omega}\al_i(\cE_if)g\,d\mu
=
\sum_i
\int_{\Omega}\al_i(\cE_if)(\cE_ig)\,d\mu
\le C_1
\|f\|_{L^p(d\mu)}\|g\|_{L^{q'}(d\mu)},
$$
where we have used (ii). 
By a duality argument, 
if we take $f=1_{E}$, 
this implies the condition \eqref{1.4}. 
We shall prove \lq\lq if" part of Theorem \ref{thm1.2}. 

Let $i_0\in\Z$ be arbitrarily taken and be fixed. 
By a standard limiting argument, 
it suffices to prove Theorem \ref{thm1.2} that the inequality
\begin{equation}\label{2.1}
\sum_{i\ge i_0}
\int_{\Omega}
\al_i(\cE_if)(\cE_ig)\,d\mu
\le C
\l\{\|f\|_{L^p(d\mu)}^{p\theta}+\|g\|_{L^{q'}(d\mu)}^{q'\theta}\r\},
\quad\theta:=\frac1p+\frac1{q'},
\end{equation}
holds (the rest follows from the homogeneity). 

We set 
$$
F_i
:=
\{(\cE_ig)^{q'}\le(\cE_if)^p\}
\text{ and }
G_i
:=
\Omega\setminus F_i.
$$
We notice that $F_i,G_i\in\cF_i$.
We shall prove 
\begin{equation}\label{2.2}
\sum_{i\ge i_0}
\int_{\Omega}
1_{F_i}\al_i(\cE_if)(\cE_ig)\,d\mu
\le C
\|f\|_{L^p(d\mu)}^{p\theta}
\end{equation}
and
\begin{equation}\label{2.3}
\sum_{i\ge i_0}
\int_{\Omega}
1_{G_i}\al_i(\cE_if)(\cE_ig)\,d\mu
\le C
\|g\|_{L^{q'}(d\mu)}^{q'\theta}.
\end{equation}

Since the proofs of \eqref{2.2} and \eqref{2.3} 
can be done in completely symmetric way, 
we shall only prove \eqref{2.2} in the following. 

\subsection{Construction of principal sets}\label{ssec2.1}
We now introduce the construction of principal sets as follows. 
Suppose that $P\in\cF_i$ satisfy 
$\mu(P)>0$ and, for some $k\in\Z$, 
$$
1_{P}2^{k-1}<1_{P}\cE_if\le 1_{P}2^k.
$$
We will write 
$\kp_1(P):=i$ and $\kp_2(P):=k$.
We define a stopping time 
$$
\tau_{P}
:=
1_{P}
\inf\{j\ge \kp_1(P):\,
\cE_jf>2^{\kp_2(P)+1}\}.
$$
We call a set $Q\subset P$ a principal set with respect to $P$ 
if it satisfies $\mu(Q)>0$ and 
there exist 
$j>\kp_1(P)$ and $l>\kp_2(P)+1$ 
such that 
$$
Q
=
\{2^{l-1}<1_{\{\tau_{P}=j\}}\cE_jf\le 2^l\}.
$$
Noticing that such $j$ and $l$ are unique, 
we will write 
$\kp_1(Q):=j$ and $\kp_2(Q):=l$.
Let $\cP^{*}(P)$ be 
the collection of all principal sets $Q$ 
with respect to $P$ and let 
$\cP(P):=\bigcup_{Q\in\cP^{*}(P)}Q$.
Then it is easy to see that they satisfy the following properties:

\begin{description}
\item[{\rm(iv)}] 
$\ds
1_{\{\kp_1(P)\le j<\tau_{P}\}}
\cE_jf
\le 2^{\kp_2(P)+1};
$
\item[{\rm(v)}] 
$\ds
\mu(\cP(P))\le2^{-1}\mu(P).
$
\end{description}

\noindent
Indeed, 
(v) follows from the use of weak-$(1,1)$ boundedness of Doob's maximal operator:
$$
\mu(\cP(P))
\le
2^{-\kp_2(P)-1}
\int_{P}f\,d\mu
=
2^{-\kp_2(P)-1}
\int_{P}\cE_if\,d\mu
\le 2^{-1}\mu(P).
$$

To construct the collection $\cP$ of principal sets, 
consider all $P\in\cF_{i_0}^0$ such that 
$\mu(P)>0$ and, for some $k\in\Z$, 
$$
P
=
\{2^{k-1}<\cE_{i_0}f\le 2^k\};
$$
that will be the first generation $\cP_1^{*}$ of principal sets. 
To obtain the second generation of principal sets, 
for each $P\in\cP_1^{*}$ 
we construct the collection
$\cP^{*}(P)$ of principal sets with respect to $P$, 
and define the second generation 
$$
\cP_2^{*}
:=
\bigcup_{P\in\cP_1^{*}}\cP^{*}(P).
$$
The next generations are defined inductively: 
$$
\cP_{n+1}^{*}
:=
\bigcup_{P\in\cP_n^{*}}\cP^{*}(P).
$$
We define the collection of principal sets $\cP$ by 
$$
\cP
:=
\bigcup_{n=1}^{\infty}\cP_n^{*}.
$$
We need the following lemma, 
which is a Carleson embedding theorem 
associated with the collection of principal sets $\cP$. 

\begin{lemma}\label{lem2.2}
We have 
$$
\sum_{P\in\cP}
\mu(P)2^{p(\kp_2(P)-1)}
\le 2
\|f\|_{L^p(d\mu)}^p.
$$
\end{lemma}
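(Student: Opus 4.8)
The plan is to read this as a Carleson embedding estimate and to exploit that property (v) makes the family $\cP$ sparse. The two ingredients I would isolate first are the defining band estimate $2^{\kp_2(P)-1}<\cE_{\kp_1(P)}f\le 2^{\kp_2(P)}$, valid on each $P\in\cP$, and the packing consequence of (v): since $\mu(\cP(P))\le\frac12\mu(P)$, summing the resulting geometric series over the generations contained in any fixed principal set $R$ gives $\sum_{P\in\cP,\,P\subseteq R}\mu(P)\le 2\mu(R)$. Writing $\lambda_P:=2^{\kp_2(P)-1}$ and $Mf:=\sup_i\cE_i f$ for Doob's maximal function, the band estimate says exactly that $\lambda_P<\cE_{\kp_1(P)}f\le Mf$ on $P$.

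Next I would pass to a distributional formulation. For $t>0$ consider the principal sets with $\lambda_P>t$; among them the maximal ones (with respect to inclusion) are pairwise disjoint, because two principal sets are always either nested or disjoint, and each maximal one is contained in $\{Mf>t\}$ since $\cE_{\kp_1(P)}f>\lambda_P>t$ there. Combining disjointness of the maximal sets with the packing bound applied inside each of them yields
$$
\sum_{P\in\cP:\,\lambda_P>t}\mu(P)\le 2\,\mu(\{Mf>t\}).
$$
Here the key structural fact is that $\kp_2$ increases by at least $2$ along any chain of nested principal sets (a child $Q$ of $P$ satisfies $\kp_2(Q)>\kp_2(P)+1$), so the many generations are genuinely controlled by the single super-level set $\{Mf>t\}$ rather than being over-counted.

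Finally I would integrate in $t$ by the Cavalieri/layer-cake formula,
$$
\sum_{P\in\cP}\mu(P)\lambda_P^p
=p\int_0^\infty t^{p-1}\Big(\sum_{P:\,\lambda_P>t}\mu(P)\Big)\,dt
\le 2p\int_0^\infty t^{p-1}\mu(\{Mf>t\})\,dt
=2\|Mf\|_{L^p(d\mu)}^p,
$$
and then invoke the $L^p$-boundedness of Doob's maximal operator (for $p>1$) to replace $\|Mf\|_{L^p(d\mu)}$ by $\|f\|_{L^p(d\mu)}$ and reach a bound of the asserted form.

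The step I expect to be the main obstacle is the middle one: passing from the raw sum, in which a single point may lie in arbitrarily many nested principal sets, to a clean estimate without losing the constant. The naive pointwise comparison $\lambda_P^p\mu(P)\le\int_P f^p\,d\mu$, obtained from conditional Jensen together with the defining property of $\cE_{\kp_1(P)}$ on the $\cF_{\kp_1(P)}$-set $P$, over-counts badly, since the $L^p$-mass of $f$ tends to concentrate in the deeper generations; it is precisely the sparseness coming from the weak-$(1,1)$ estimate behind (v), together with the geometric gap in $\kp_2$, that compensates for this overlap. Tracking the constants through the maximal-function reduction, and in particular matching the clean constant claimed in the statement, is the delicate part of the argument.
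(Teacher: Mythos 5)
Your proposal is correct and takes essentially the same route as the paper's own proof: the level sets $\Gamma_t=\{P\in\cP:\,2^{\kp_2(P)-1}>t\}$, the disjoint maximal principal sets, the packing bound coming from property (v), the inclusion of the maximal sets in $\{f^{*}>t\}$, and finally the layer-cake formula together with Doob's maximal theorem. The constant issue you flag at the end is real, but it is shared by the paper's proof, which likewise arrives at $2\|f^{*}\|_{L^p(d\mu)}^p$ and then absorbs the factor $(p')^p$ from Doob's inequality without comment.
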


\begin{proof}
For $\lm>0$ let 
$$
\Gm_{\lm}
:=
\{P\in\cP:\,2^{\kp_2(P)-1}>\lm\}.
$$
Considering the maximal sets with respect to inclusion and 
using the property (v), we can write 
$$
\sum_{P\in\Gm_{\lm}}\mu(P)
\le 2
\sum_k\mu(P_k),
$$
where the sets $\{P_k\}\subset\Gm_{\lm}$ 
are nonoverlapping. Since 
$$
1_{P_k}\lm<1_{P_k}2^{\kp_2(P_k)-1}
<
1_{P_k}\cE_{\kp_1(P_k)}f
\text{ for all }P_k,
$$
we have 
$$
\sum_kP_k\subset\{f^{*}>\lm\},
$$
where $f^{*}$ is Doob's maximal operator defined by 
$$
f^{*}:=\sup_{i\in\Z}|\cE_if|.
$$
These imply 
$$
\sum_{P\in\Gm_{\lm}}\mu(P)
\le 2
\mu(\{f^{*}>\lm\})
$$
and hence 
$$
\sum_{P\in\cP}
\mu(P)2^{p(\kp_2(P)-1)}
\le 2
\|f^{*}\|_{L^p(d\mu)}^p.
$$
We have then the desired inequality by Doob's maximal theorem. 
\end{proof}

\subsection{Proof of \eqref{2.2}}\label{ssec2.2}
Using the principal sets $\cP$, 
we can decompose the left-hand side of \eqref{2.2} 
as follows: 
$$
\sum_{i\ge i_0}
\int_{\Omega}
1_{F_i}\al_i(\cE_if)(\cE_ig)\,d\mu
=
\sum_{P\in\cP}
\sum_{i\ge\kp_1(P)}
\int_{P\cap\{i<\tau_{P}\}}
1_{F_i}\al_i(\cE_if)(\cE_ig)\,d\mu.
$$
It follows from 
the property (iv), 
the condition \eqref{1.4} and 
H\"{o}lder's inequality that 
\begin{alignat*}{2}
\lefteqn{
\sum_{i\ge\kp_1(P)}
\int_{(P\setminus\cP(P))\cap\{i<\tau_{P}\}}
1_{F_i}\al_i(\cE_if)(\cE_ig)\,d\mu.
}\\ &\le 
2^{\kp_2(P)+1}
\int_{P}
\l(\sum_{i\ge\kp_1(P)}\al_i\r)
\l(\sup_{i\ge\kp_1(P)}1_{F_i}(\cE_ig)\r)
1_{P\setminus\cP(P)}
\,d\mu
\\ &\le
2^{\kp_2(P)+1}
\l\{\int_{P}
\l(\sum_{i\ge\kp_1(P)}\al_i\r)^q
\,d\mu\r\}^{\frac1q}
\l\{\int_{P\setminus\cP(P)}
\l(\sup_{i\ge\kp_1(P)}1_{F_i}(\cE_ig)\r)^{q'}
\,d\mu\r\}^{\frac1{q'}}
\\ &\le C
2^{\kp_2(P)+1}\mu(P)^{\frac1p}
\l\{\int_{P\setminus\cP(P)}
\l(\sup_{i\ge\kp_1(P)}1_{F_i}(\cE_ig)\r)^{q'}
\,d\mu\r\}^{\frac1{q'}}.
\end{alignat*}
The definition of $F_i$ enables us that
$$
\l(\sup_{i\ge\kp_1(P)}1_{F_i}(\cE_ig)\r)^{q'}
\le
\l(\sup_{i\ge\kp_1(P)}1_{F_i}(\cE_if)\r)^p.
$$
H\"{o}lder's inequality gives 
\begin{alignat*}{2}
\lefteqn{
\sum_{P\in\cP}
\sum_{i\ge\kp_1(P)}
\int_{(P\setminus\cP(P))\cap\{i<\tau_{P}\}}
1_{F_i}\al_i(\cE_if)(\cE_ig)\,d\mu
}\\ &\le C
\l\{\sum_{P\in\cP}\mu(P)2^{p(\kp_2(P)-1)}\r\}^{\frac1p}
\l\{\sum_{P\in\cP}
\l(\int_{P\setminus\cP(P)}(f^{*})^p\,d\mu\r)^{\frac{p'}{q'}}
\r\}^{\frac1{p'}}
\\ &\le C
\l\{\sum_{P\in\cP}\mu(P)2^{p(\kp_2(P)-1)}\r\}^{\frac1p}
\l\{\sum_{P\in\cP}
\int_{P\setminus\cP(P)}(f^{*})^p\,d\mu
\r\}^{\frac1{q'}}
\\ &\le C
\|f\|_{L^p(d\mu)}^{p\theta},
\end{alignat*}
where we have used 
Lemma \ref{lem2.2},
$\ds\frac{p'}{q'}\ge 1$,
the fact that the sets $P\setminus\cP(P)$ 
are nonoverlapping and Doob's maximal theorem. 

It follows also that 
\begin{alignat*}{2}
\lefteqn{
\sum_{i\ge\kp_1(P)}
\int_{\cP(P)\cap\{i<\tau_{P}\}}
1_{F_i}\al_i(\cE_if)(\cE_ig)\,d\mu.
}\\ &\le 
2^{\kp_2(P)+1}
\int_{P}
\l(\sum_{i\ge\kp_1(P)}\al_i\r)
\l(\sup_{\kp_1(P)\le i<\tau_{P}}1_{F_i}(\cE_ig)\r)
1_{\cP(P)}
\,d\mu
\\ &\le
2^{\kp_2(P)+1}
\l\{\int_{P}
\l(\sum_{i\ge\kp_1(P)}\al_i\r)^q
\,d\mu\r\}^{\frac1q}
\l\{\int_{\cP(P)}
\l(\sup_{\kp_1(P)\le i<\tau_{P}}1_{F_i}(\cE_ig)\r)^{q'}
\,d\mu\r\}^{\frac1{q'}}
\\ &\le C
2^{\kp_2(P)+1}\mu(P)^{\frac1p}
\l\{\int_{\cP(P)}
\l(\sup_{\kp_1(P)\le i<\tau_{P}}1_{F_i}(\cE_ig)\r)^{q'}
\,d\mu\r\}^{\frac1{q'}}.
\end{alignat*}
By the definition of $F_i$ 
$$
\l(\sup_{\kp_1(P)\le i<\tau_{P}}1_{F_i}(\cE_ig)\r)^{q'}
\le
\l(\sup_{\kp_1(P)\le i<\tau_{P}}1_{F_i}(\cE_if)\r)^p.
$$
We notice that 
$$
\l(\sup_{\kp_1(P)\le i<\tau_{P}}1_{F_i}(\cE_if)\r)^p
\le 2^{p(\kp_2(P)+1)}.
$$
These yield
$$
\l\{\int_{\cP(P)}
\l(\sup_{\kp_1(P)\le i<\tau_{P}}1_{F_i}(\cE_ig)\r)^{q'}
\,d\mu\r\}^{\frac1{q'}}
\le
\l\{\mu(P)2^{p(\kp_2(P)+1)}\r\}^{\frac1{q'}}.
$$
Thus, we obtain 
\begin{alignat*}{2}
\sum_{P\in\cP}
\sum_{i\ge\kp_1(P)}
\int_{\cP(P)\cap\{i<\tau_{P}\}}
1_{F_i}\al_i(\cE_if)(\cE_ig)\,d\mu
&\le C
\sum_{P\in\cP}
\l(\mu(P)2^{p(\kp_2(P)-1)}\r)^{\theta}
\\ &\le C
\l\{\sum_{P\in\cP}\mu(P)2^{p(\kp_2(P)-1)}\r\}^{\theta}
\\ &\le C
\|f\|_{L^p(d\mu)}^{p\theta},
\end{alignat*}
where we have used $\theta\ge 1$ and Lemma \ref{lem2.2}. 
The proof is now complete.

\end{document}